 
\documentclass{article}
\usepackage{fullpage}

\usepackage[a4paper, total={5.5in, 8.5in}]{geometry}

\usepackage{amsmath,amsthm,amssymb,mathtools}
\usepackage{graphicx}
\usepackage[usenames,dvipsnames]{xcolor}

\graphicspath{ {images/} }
\usepackage{hyperref} 

\newcommand{\atw}{\operatorname{Aut}}
\newcommand{\aut}{\operatorname{Aut}}

\newtheorem{theorem}{Theorem}

\newtheorem{lemma}[theorem]{Lemma}
\newtheorem{corollary}[theorem]{Corollary}
\newtheorem{proposition}[theorem]{Proposition}

\newtheorem{example}{Example}

\theoremstyle{definition}

\makeatletter
\def\moverlay{\mathpalette\mov@rlay}
\def\mov@rlay#1#2{\leavevmode\vtop{%
   \baselineskip\z@skip \lineskiplimit-\maxdimen
   \ialign{\hfil$\m@th#1##$\hfil\cr#2\crcr}}}
\newcommand{\charfusion}[3][\mathord]{
    #1{\ifx#1\mathop\vphantom{#2}\fi
        \mathpalette\mov@rlay{#2\cr#3}
      }
    \ifx#1\mathop\expandafter\displaylimits\fi}
\makeatother

\newcommand{\bigcupdot}{\charfusion[\mathop]{\bigcup}{\cdot}}

\newcommand{\LabelQuote}[2]{\vspace{0.5cm}%
     \parbox{13cm}{\em #1}\hspace*{0.5cm}(#2)\\[0.5cm]}

\begin{document}
\title{On the Automorphisms of Token Graphs Generated by $2$-cuts with the Same Neighbours\footnote{Supported by CONACYT FORDECYT-PRONACES/39570/2020}}
\author{Ruy Fabila-Monroy \thanks{Departamento de Matem\'aticas, CINVESTAV.} 
\footnote{\tt{ruyfabila@math.cinvestav.edu.mx}}    
\and Sergio Gerardo G\'omez-Galicia\footnotemark[2] \footnote{\texttt{sgomez@math.cinvestav.mx}} \and Daniel Gregorio-Longino\footnotemark[2] \footnote{\texttt{dgregorio@math.cinvestav.mx }}\and Teresa I. Hoekstra-Mendoza\thanks{Centro de Investigaci\'on en Matem\'aticas, Guanajuato, Mexico. \texttt{maria.idskjen@cimat.mx}} \and 
Ana  Trujillo-Negrete\thanks{Centro de Modelamiento Matem\'atico (CNRS IRL2807), Universidad de Chile, Santiago, Chile.
	Partially supported by ANID/Fondecyt Postdoctorado 3220838, and by ANID Basal Grant CMM
	FB210005. \texttt{ltrujillo@dim.uchile.cl}}}
\maketitle

\begin{abstract}
Let $G$ be a connected graph on $n$ vertices and $1 \le k \le n-1$ an integer. The $k$-token graph of $G$ is the graph $F_k(G)$ whose vertices are all the $k$-subsets of vertices of $G$, two of which are adjacent whenever their symmetric difference is an edge of $G$. Every automorphism of $G$ induces an automorphism of $F_k(G)$ in a natural way. Suppose that $S:=\{x,y\}$ is a cut set of $G$, such that $x$ and $y$ 
have the same neighbours in $G\setminus \{x,y\}$. In this paper we show that there exist a large number
of automorphisms of $F_k(G)$ defined by $S$ that are not induced by automorphisms of $G$. We also describe
the group produced by all such $2$-cuts of $G$. 
\end{abstract}

\section*{Introduction}

Let $G$ be a graph on $n$ vertices, and $1 \le k \le n-1$ an integer. 
The \emph{$k$-token graph} of $G$ is the graph  $F_k(G)$, whose vertices are all $k$-subsets of vertices of $G$; in $F_k(G)$ two vertices $A$ and $B$  are adjacent whenever their symmetric difference $A\triangle B$ is an edge of $G$.
Token graphs have been defined independently under different names at least four times~\cite{double_vertex,Johns,rudolph,ktuple, Token}.
In this paper we use the following interpretation given in~\cite{Token}. Place $k$ indistinguishable tokens at the vertices of $G$, with at most
one token per vertex. Construct a new graph whose vertices are all possible token configurations. Two token configurations
are adjacent if one can be obtained from the other by sliding a token along an edge of $G$ to an unoccupied vertex. The graph
so constructed is isomorphic to $F_k(G)$. In Figure~\ref{fig:example}, a graph is shown together with
its $2$-token graph.

\begin{figure}[ht]
	\begin{center}
		\includegraphics[width=.8\textwidth]{./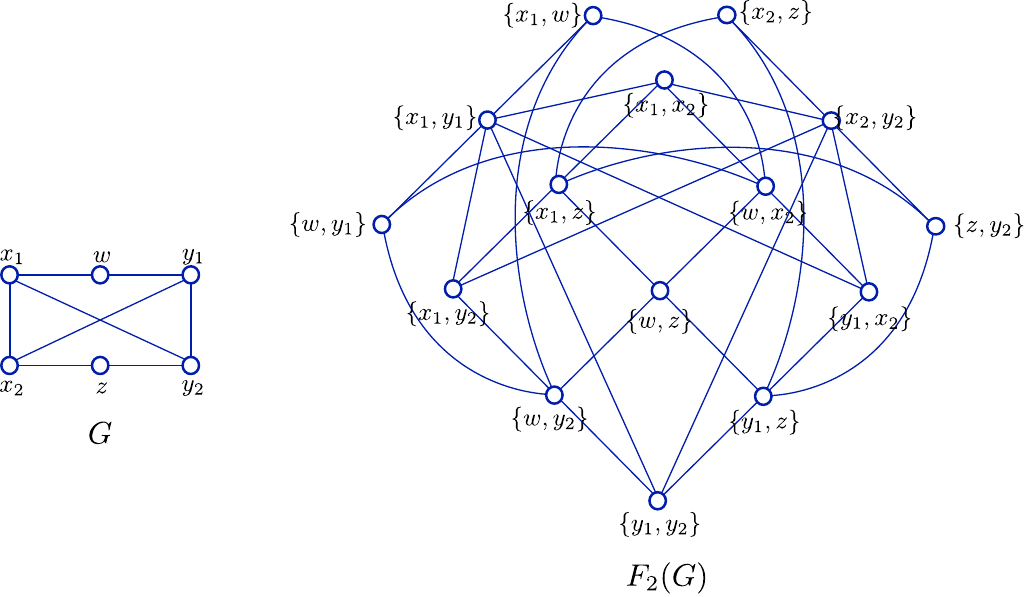}
	\end{center}
	\caption{A graph $G$ and its $2$-token graph}\label{fig:example}
\end{figure}

Let $\aut(G)$ be the group of automorphisms of $G$. Let $\iota$ be the function that maps every $f \in \aut(G)$, to the function
\[\iota(f)(\{a_1,\dots,a_k\})=\{f(a_1),\dots,f(a_k)\}, \] 
for every $\{a_1,\dots,a_k\} \in V(F_k(G))$. It is straightforward to prove that $\iota(f)$ is an automorphism of $F_k(G)$.
In fact, as Ibarra and Rivera~\cite{sofiamanuel} have shown, $\iota$ is an injective group homomorphism. 

Let $\mathfrak{c}$ be the function that maps every $A \in V(F_k(G))$ to
\[\mathfrak{c}(A):=V(G)\setminus A.\]
This function is an isomorphism from $F_k(G)$ to $F_{n-k}(G)$, and thus,
 an automorphism of $F_k(G)$, whenever $k=n/2$. In this case, we call $\mathfrak{c}$ \emph{the complement automorphism}.
Whenever an automorphism $g \in \aut(F_k(G))$ is such that there exists $f \in  \aut(G)$ such
that
\[g =\iota(f) \textrm{ or }  g =\mathfrak{c} \circ\iota(f),\]
 we say that $g$ is an \emph{induced automorphism} of $F_k(G)$. It was shown
in \cite{C4Diamond} that  for all $f \in \aut(G)$ we have that
\[\mathfrak{c} \neq \iota(f).\]
Furthermore, if $k=n/2$, then
\[\mathfrak{c} \circ \iota(f)=\iota(f) \circ \mathfrak{c},\]
for every $f \in \aut(G)$. Therefore,
\[\iota(\aut(G)) \le \aut(F_k(G))\] and
\[\iota(\aut(G))\times \mathbb{Z}_2 \le \aut(F_k(G))  \textrm{ when } k=n/2. \]

Recently, two examples have been found that show that these inclusions can be far from being equalities. 
\begin{itemize}
\item Zhang, Zhou, Lee, Li and Xie \cite{2t_cube} show that if $G$ is the Cartesian product of $r$ prime graphs, then
\[
    \mathbb{Z}_{2}^{r-1}\rtimes\atw(G)\leq \atw(F_{2}(G));
\]
and

\item Fabila-Monroy and Trujillo-Negrete~\cite{autbipartite} show that for $1<k<n+1$
 \[
	\atw(F_{k}(K_{2,n})) =
	\begin{cases}
        \mathbb{Z}_{2}\wr_{\binom{[n]}{k-1}}S_{n} & \text{ if }  k\neq \frac{n+2}{2}, \\
		 \mathbb{Z}_{2}\wr_{\binom{[n]}{k-1}}\left ( S_{n}\times\mathbb{Z}_{2}\right ) & \text{otherwise}.
	\end{cases}
\]
\end{itemize}
Where $\binom{[n]}{k-1}$ is the set of all subsets of $[n]:=\{1,\dots,n\}$ of size $k-1$; and a permutation $\pi \in S_n$
acts on $\binom{[n]}{k-1}$ by sending every element $\{x_1,\dots,x_{k-1}\}$ to $\{\pi(x_1),\dots,\pi(x_{k-1})\}$.

In this paper we generalize the result of~\cite{autbipartite}. For definitions and concepts of group theory we refer the reader
to the Preliminaries section of~\cite{autbipartite}, Rotman's book on Group Theory~\cite{rotman}, and the book
by Godsil and Royle on Algebraic Graph Theory~\cite{godsil}. 
\section*{2-cuts with the same neighbours}

Given graph $H$, let $C(H)$ be the set of connected components of $H$, and $c(H):=|C(H)|$. 
Throughout the rest of the paper, assume that $G$ is connected.
Let $S:=\{x,y\}$ be a cut set of $G$, such that $x$ and $y$ have the same neighbours in $V(G)\setminus S$.
We say that $S$ is a \emph{2-cut with the same neighbours} of $G$. In the graph $G$ of Figure~\ref{fig:example},
$\{x_1,y_1\}$ and $\{x_1,y_2\}$ are $2$-cuts with the same neighbours.

\begin{lemma}\label{lem:2cut}
 If $S:=\{x,y \}$ and $S':=\{w,z\}$ are two distinct 2-cuts with the same neighbours of $G$, 
 then
 \begin{itemize}
 \item[$a)$] $S$ and $S'$ are disjoint; and
 
 \item[$b)$] if $G$ is not a $4$-cycle, then $w$ and $z$ are in the same connected component of $G \setminus S$. 
 \end{itemize}
\end{lemma}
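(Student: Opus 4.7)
My plan is to prove both parts by contradiction, exploiting the fact that the ``same neighbours'' condition makes the two vertices of each cut into twins of $G$, so that the identity $N(x) \setminus \{y\} = N(y) \setminus \{x\}$ can be manipulated freely.

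For part $(a)$ I would suppose $S \cap S' \neq \emptyset$, so after renaming $S = \{x, y\}$ and $S' = \{x, z\}$ with $y \neq z$. Setting $P := N(x) \setminus \{y, z\}$, the two twin conditions immediately give $P = N(y) \setminus \{x, z\} = N(z) \setminus \{x, y\}$. Testing whether $z \in N(y)$ via the identity $N(y) \setminus \{x\} = N(x) \setminus \{y\}$ (and symmetrically whether $y \in N(z)$) shows that the three edges $xy$, $xz$, $yz$ are either all present or all absent; in either case $x, y, z$ share exactly the external neighbourhood $P$. I would then examine $G \setminus S$: since $z$ is adjacent to every vertex of $P$ there, the component of $z$ contains $\{z\} \cup P$. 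Any other component $D$ of $G \setminus S$ must avoid $P \cup \{z\}$; but connectedness of $G$ forces some edge from $D$ into $S$, and any such edge would place a vertex of $D$ inside $N(x) \cup N(y) \subseteq P \cup \{x, y, z\}$, contradicting $D \cap (P \cup \{x, y, z\}) = \emptyset$.

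For part $(b)$, using $(a)$ I may assume $S$ and $S'$ are disjoint and suppose for contradiction that $w$ and $z$ lie in different components $C_w$, $C_z$ of $G \setminus S$. Since the edge $wz$ would merge these components, $wz \notin E(G)$ and $w$, $z$ are false twins with $N(w) = N(z) =: Q$. Any $q \in Q \setminus \{x, y\}$ would be a common neighbour of $w$ and $z$ in $G \setminus S$, forcing $q \in C_w \cap C_z = \emptyset$; hence $Q \subseteq \{x, y\}$. If $|Q| \le 1$ then $w$ and $z$ are pendants of a single vertex and $\{w, z\}$ cannot be a cut, so $Q = \{x, y\}$. I would then apply the symmetric argument to the cut $S'$: since the only neighbours of $w$ and $z$ are $x$ and $y$, every disconnection of $G \setminus \{w, z\}$ must separate $x$ from $y$, and then the twin condition on $\{x, y\}$, via the same component argument as in $(a)$, forces $N(x), N(y) \subseteq \{x, y, w, z\}$. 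Connectedness of $G$ now yields $V(G) = \{x, y, w, z\}$; if additionally $xy \in E$ then $G \setminus \{w, z\}$ would be connected, contradicting the cut hypothesis, so $xy \notin E$ and $G$ is the $4$-cycle, contradicting the hypothesis.

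I expect the main obstacle to be the bookkeeping at the start of $(a)$: verifying that the two twin relations combine to give the full ``triple twin'' structure (common external neighbourhood $P$ together with synchronised edges among $x, y, z$), since the case distinction between true and false twins threatens to make the argument branch. Once that symmetry is extracted, both parts collapse to the same idea, namely that a would-be extra component of $G \setminus S$ has nowhere in $S \cup P$ to attach to, contradicting connectedness.
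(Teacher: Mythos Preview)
Your argument is correct and follows essentially the same strategy as the paper: for $(a)$, show that the third vertex is adjacent to every external neighbour of $S$, so that removing $S$ cannot disconnect $G$; for $(b)$, show that $w$ and $z$ can have no neighbours outside $S$, forcing the $4$-cycle. Your treatment is in fact more explicit than the paper's --- the paper simply asserts that $G_1=\{w\}$, $G_2=\{z\}$ implies ``$\{w,z\}$ is not a cut set unless $G$ is a $4$-cycle'', whereas you spell this out by invoking the symmetric role of $S'$ to pin down $N(x),N(y)\subseteq\{w,z\}$ as well; and your worry about the bookkeeping among the edges $xy,xz,yz$ is unnecessary, since (as in the paper) the argument only needs the external neighbourhoods to coincide.
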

\begin{proof}
 Suppose $|S \cap S'|=1$. Without loss of generality, suppose that $x=z$.
 Suppose that $u$ is a neighbour of $y$ distinct from $x$ and $w$. Thus, $u$ is also a neighbour of $x$, and this  implies that it is also a neighbour of $w$. Suppose now that $u$ is a neighbour of $x$ distinct from $y$ and  $w$. Thus, $u$ is also neighbour of $w$. Therefore, every neighbour of $x$ or $y$  distinct from $x,y$ or $w$ is also a neighbour of $w$; this contradicts the assumption that $S$ is a cut set of $G$. 
 Therefore, $S$ and $S'$ are disjoint. This proves $a)$.
 
 Suppose that $w$ and $z$ lie in distinct connected components,
 $G_1$ and $G_2$ of $G\setminus S$, respectively. Note that there  cannot exists a vertex in $G_1$ adjacent to $w$, as otherwise this vertex would be adjacent to $z$ as well. Similarly, there 
  cannot exists a vertex in $G_2$ adjacent to $z$. Therefore,  $G_1$ and $G_2$
 consist of only $w$ and $z$, respectively. Implying that $\{w,z\}$ is not a cut set of $G$,
 unless $G$ is a $4$-cycle. This proves $b)$. 
\end{proof}

For every $A \in V(F_k(G))$ such that $|A\cap S|=1$, let 
\[A^S= (A\setminus  S)\cup ( S\setminus A).\] 
Thus, $A^S$ is obtained from $A$ by moving the token at the only vertex in $A \cap  S$ to
the vertex of $ S$ not in $A$. Note that $(A^S)^S=A$. The next corollary follows directly from Lemma~\ref{lem:2cut}.
\begin{corollary}\label{cor:2cut}
 Let $S_1$ and $S_2$ be two distinct $2$-cuts with the same neighbours of $G$. Then
 for every $A \in V(F_k(G))$ such that $|A\cap S_1|=|A\cap S_2|=1$, we have that
 \[\left (A^{S_1} \right )^{S_2}=\left (A^{S_2} \right )^{S_1}.\]
\end{corollary}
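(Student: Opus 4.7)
The plan is to use part $(a)$ of Lemma~\ref{lem:2cut} to reduce the claim to a purely set-theoretic identity that follows from the definition of $A^{S}$.

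Write $S_1=\{x_1,y_1\}$ and $S_2=\{x_2,y_2\}$. By hypothesis $|A\cap S_1|=|A\cap S_2|=1$, so after possibly relabelling, $A\cap S_1=\{x_1\}$ and $A\cap S_2=\{x_2\}$. By Lemma~\ref{lem:2cut}(a), $S_1\cap S_2=\emptyset$, so the four vertices $x_1,y_1,x_2,y_2$ are pairwise distinct.

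Now compute $A^{S_1}=(A\setminus\{x_1\})\cup\{y_1\}$. The key observation, coming from $S_1\cap S_2=\emptyset$, is that removing $x_1$ and adding $y_1$ affects neither $x_2$ nor $y_2$, so $A^{S_1}\cap S_2=\{x_2\}$ is still a singleton, and the second operation $(\cdot)^{S_2}$ is still defined. Applying it gives
\[
\bigl(A^{S_1}\bigr)^{S_2}=\bigl(A\setminus\{x_1,x_2\}\bigr)\cup\{y_1,y_2\}.
\]
The same calculation with the roles of $S_1$ and $S_2$ reversed yields the same set, proving the corollary.

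There is no real obstacle here: all the work is done by Lemma~\ref{lem:2cut}(a), which guarantees the two cuts are disjoint and hence the two swap operations act on disjoint pairs of coordinates. The only thing to check carefully is that $|A^{S_1}\cap S_2|=1$ (and symmetrically) so that the nested notation on both sides of the displayed equation is well-defined; this is immediate from disjointness.
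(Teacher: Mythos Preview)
Your proof is correct and is exactly the argument the paper intends: the paper simply asserts that the corollary follows directly from Lemma~\ref{lem:2cut}, and you have spelled out the routine set-theoretic computation that disjointness of $S_1$ and $S_2$ (Lemma~\ref{lem:2cut}(a)) makes the two swap operations act on disjoint pairs and hence commute.
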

\qed

\subsection*{Automorphisms generated by a $2$-cut with the same neighbours }

Let $c_{ S}:=|C(G\setminus  S)|$; and let $G_1,\dots,G_{c_{ S}}$ be the connected components
of $G\setminus S$ in some fixed order. Let $n_1,\dots,n_{c_{ S}}$ be the cardinalities of $V(G_1),\dots,V(G_{c_{ S}})$, respectively.
Let \[T_{ S}:=\left \{(k_1,\dots,k_{c_{S}}) : \textrm{ every } k_i \textrm{ is an integer, } 0 \le k_i \le n_i \textrm{ and } \sum_{i=1}^{c_{S}} k_i=k-1 \right \}.\]
In what follows we consider other such sets of tuples for different $2$-cuts with the same neighbours of $G$.
To avoid confusion, we occasionally  write 
$(k_1,\dots,k_{c_S})_{S_1}$ to refer to the element $(k_1,\dots,k_{c_S})$ \emph{in} $T_{S_1}$.

For every $t \in T_{ S}$, let 
\[V_{S,t}:=\left \{A \in V(F_k(G)): |A \cap V(G_i)|=t(i) \textrm{ for every } 1 \le i \le c_{S} \right \}.\]
%
%
%
%
%
Let $\varphi_{S,\alpha}:V(F_k(G)) \to V(F_k(G))$  be the map that sends every $A \in V(F_k(G))$ to
\[
    \varphi_{S,\alpha}(A):=
    \begin{cases}
        A^S & \textrm{ if there exists } t \in \alpha \textrm{ such that } A \in V_{ S,t};\\
        A & \textrm{otherwise.}
    \end{cases}
\]

\begin{example}
 Let $G$ be as in Figure~\ref{fig:example}, and $S=\{x_1,y_1\}$. Note that $G\setminus S$, has two connected components,$G_1$ and $G_2$, whose vertex set are equal to $\{w\}$ and $\{x_2,z,y_2\}$, respectively.
 Let $\alpha=(0,1)$ and  set $k=2$. We have that:
 \[
\begin{array}{rcl@{\quad}rcl@{\quad}rcl}
    \varphi_{S,\alpha}(\{x_1,x_2\}) &=& \{y_1,x_2\}; &
    \varphi_{S,\alpha}(\{x_1,z\}) &=& \{y_1,z\}; &
    \varphi_{S,\alpha}(\{x_1,y_2\}) &=& \{y_1,y_2\}; \\[1em]
    \varphi_{S,\alpha}(\{y_1,x_2\}) &=& \{x_1,x_2\}; &
    \varphi_{S,\alpha}(\{y_1,z\}) &=& \{x_1,z\}; &
    \varphi_{S,\alpha}(\{y_1,y_2\}) &=& \{x_1,y_2\};\\[1em]
    \varphi_{S,\alpha}(\{w,x_2\}) &=& \{w,x_2\}; &
    \varphi_{S,\alpha}(\{w, z\}) &=& \{w, z\}; &
    \varphi_{S,\alpha}(\{w, y_2\}) &=& \{w, y_2\};\\[1em]
    \varphi_{S,\alpha}(\{w,x_1\}) &=& \{w,x_1\}; &
    \varphi_{S,\alpha}(\{x_1,y_1\}) &=& \{x_1, y_1\}; &
    \varphi_{S,\alpha}(\{w,y_1\}) &=& \{w,y_1\};\\[1em]
     \varphi_{S,\alpha}(\{x_2,y_2\}) &=& \{x_2,y_2\}; &
    \varphi_{S,\alpha}(\{x_2,z\}) &=& \{x_2, z\}; &
    \varphi_{S,\alpha}(\{z,y_2\}) &=& \{z,y_2\}.\\[1em]
\end{array}
\]
\qed
\end{example}

\begin{proposition}\label{prop:varphi}
 $\varphi_{S,\alpha}$ is an automorphism of $F_k(G)$.
 Moreover, $\varphi_{S,\alpha}$ is an induced automorphism of $F_k(G)$ if and only 
 if $\alpha = \emptyset$ or $\alpha = T_{ S}$. 
\end{proposition}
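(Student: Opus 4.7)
The plan is to split the proof into two tasks: first verify that $\varphi_{S,\alpha}$ is an automorphism of $F_k(G)$, then characterize when it is induced. Bijectivity is immediate because $\varphi_{S,\alpha}$ is an involution: if $A \in V_{S,t}$ for some $t \in \alpha$, then $A$ and $A^S$ agree on $V(G)\setminus S$, so $A^S \in V_{S,t}$ as well and a second application returns $A$; if $A$ lies in no $V_{S,t}$ with $t \in \alpha$, it is fixed. Hence $\varphi_{S,\alpha}\circ \varphi_{S,\alpha} = \id$.

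For adjacency preservation, I would take an edge $\{A,B\}$ of $F_k(G)$ with $A \triangle B = \{u,v\} \in E(G)$ and split into three cases according to how $\{u,v\}$ meets $S$: (i) $\{u,v\}=S$; (ii) exactly one endpoint in $S$; (iii) $\{u,v\}\subseteq V(G)\setminus S$. In cases (i) and (iii) one checks directly that $|A\cap S|=|B\cap S|$ and the component distributions coincide, so $\varphi_{S,\alpha}$ either fixes both endpoints or swaps the $S$-token in both; in either subcase $\varphi_{S,\alpha}(A) \triangle \varphi_{S,\alpha}(B) = A \triangle B$. Case (ii) is the substantive one: $\varphi_{S,\alpha}$ can act differently on $A$ and $B$, but the same-neighbours hypothesis guarantees that whenever a token is swapped between $x$ and $y$ on only one side of the edge, the new symmetric difference (of the form $\{x,v\}$ or $\{y,v\}$) remains an edge of $G$. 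I expect the bookkeeping across the subcases $|A\cap S|\in\{0,1,2\}$ to be the most tedious step of the proof, though each subcase is routine.

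The ``if'' direction of the induced characterization is direct: $\alpha = \emptyset$ yields $\varphi_{S,\alpha} = \iota(\id_G)$, and $\alpha = T_S$ yields $\varphi_{S,\alpha} = \iota(\tau)$, where $\tau$ is the transposition swapping $x$ and $y$, which lies in $\aut(G)$ precisely by the same-neighbours hypothesis. For the converse, assume $\alpha\neq \emptyset$ and $\alpha\neq T_S$, and suppose toward contradiction that $\varphi_{S,\alpha} = \iota(g)$ or $\varphi_{S,\alpha} = \mathfrak{c}\circ \iota(g)$ for some $g\in \aut(G)$. The plan is a cardinality obstruction. Choose $t \in \alpha$ and $t' \in T_S\setminus \alpha$, fix $(k-1)$-subsets $B,B'\subseteq V(G)\setminus S$ realizing the tuples $t$ and $t'$, and set $A_1:=\{x\}\cup B$ and $A_2:=\{x\}\cup B'$. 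Then $\varphi_{S,\alpha}(A_1)=\{y\}\cup B$ while $\varphi_{S,\alpha}(A_2)=A_2$, so $|\varphi_{S,\alpha}(A_1) \triangle \varphi_{S,\alpha}(A_2)| = |A_1 \triangle A_2| + 2$ (since $\{x,y\}$ is disjoint from $B\cup B'$). However, $g$ is a bijection of $V(G)$, and the identity $\mathfrak{c}(X)\triangle \mathfrak{c}(Y) = X\triangle Y$ handles the complement case; in either form of $\varphi_{S,\alpha}$ one would have $|\varphi_{S,\alpha}(A_1) \triangle \varphi_{S,\alpha}(A_2)| = |A_1 \triangle A_2|$, a contradiction. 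The pleasant feature of this argument, which I view as the main insight rather than the main obstacle, is that the particular choice of $t$ and $t'$ is immaterial: any witness pair yields exactly the same cardinality discrepancy of $2$.
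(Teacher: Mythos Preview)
Your argument is correct. The verification that $\varphi_{S,\alpha}$ is an automorphism follows the same three-case breakdown as the paper (edge inside $S$, edge crossing $S$, edge outside $S$), and the ``if'' direction of the induced characterization is identical.

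Where you diverge is in the ``only if'' direction, and your route is cleaner than the paper's. You exploit a single invariant: for any bijection $g$ of $V(G)$ one has $|\iota(g)(X)\triangle\iota(g)(Y)|=|X\triangle Y|$, and since $\mathfrak{c}(X)\triangle\mathfrak{c}(Y)=X\triangle Y$ the same invariance holds for $\mathfrak{c}\circ\iota(g)$; thus any induced automorphism preserves the cardinality of symmetric differences of vertex pairs. Producing a single pair $A_1,A_2$ on which $\varphi_{S,\alpha}$ changes this cardinality by $2$ then finishes both cases at once. The paper instead assumes $\varphi_{S,\alpha}=\iota(f)$ or $\varphi_{S,\alpha}=\mathfrak{c}\circ\iota(f)$ and analyzes the orbit of $x$ under $f$: in the $\iota(f)$ case it deduces $f(x)=y$ from the action on a set in $V_{S,t_1}$ and derives a contradiction from a set in $V_{S,t_2}$; in the $\mathfrak{c}\circ\iota(f)$ case it carries out a more delicate counting argument involving $A'=A\setminus\{x\}$ and its complement to locate a vertex $v$ with $v,f(v)\in B$. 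Your symmetric-difference invariant sidesteps this case split entirely and treats the complement uniformly, at the cost of being slightly less constructive about what goes wrong with a putative $f$.
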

\begin{proof}
 It is straightforward to show that $\varphi_{S,\alpha}$ is bijective. 
 Let $AB$ be an edge of $F_k(G)$, such that $B$ is obtained
 from $A$ by sliding a token along an edge $ab$ of $G$, with $a \in A$ and $b \in B$.
 
 Suppose that $a,b\notin  S$. Thus, $a$ and $b$ are in the same connected component of $G \setminus S$.
 This implies that there exists $t \in T_S$ such that $A,B \in V_{S,t}$. Since $A \triangle B= \{a,b\}$,
 we have that $\varphi_{S,\alpha}(A) \cap S =\varphi_{S,\alpha}(B) \cap S$,
 $A\setminus S= \varphi_{S,\alpha}(A) \setminus S$ and $B\setminus S= \varphi_{S,\alpha}(B) \setminus S$.
 Therefore,  $\varphi_{S,\alpha}(A) \triangle \varphi_{S,\alpha}(B)=\{a,b\}$, and $\varphi_{S,\alpha}(A)\varphi_{S,\alpha}(B)$ is an edge of $F_k(G)$.

 Suppose that $\{a,b\}= S$. Thus, there exists $t \in T_S$ such that $A,B \in V_{S,t}$.
 If $t \in \alpha$, then  $\varphi_{S,\alpha}(A) \cap S=\{b\}$ and
  $\varphi_{S,\alpha}(B) \cap S=\{a\}$; if $t \notin \alpha$, then  $\varphi_{S,\alpha}(A) \cap S=\{a\}$ and
  $\varphi_{S,\alpha}(B) \cap S=\{b\}$. Since $A\setminus S= \varphi_{S,\alpha}(A) \setminus S$ and $B\setminus S= \varphi_{S,\alpha}(B) \setminus S$, in both cases we have that  $\varphi_{S,\alpha}(A) \triangle \varphi_{S,\alpha}(B)=\{a,b\}$, and $\varphi_{S,\alpha}(A)\varphi_{S,\alpha}(B)$ is an edge of $F_k(G)$.  

Finally, suppose that exactly one of
  $a$ and $b$ is in $ S$. Without loss of generality assume that $a \in  S$ and $b \notin  S$. 
  Let $\overline{a}$
  be the only vertex in $ S\setminus \{a\}$. Since $a$ and $b$ are adjacent in $G$, 
  $\overline{a}$ and $b$ are adjacent. 
  Suppose that $|S \cap A|=1$. Thus, $|S \cap B|=0$. This implies
  that $\varphi_{S,\alpha}(A)=A$ or $\varphi_{S,\alpha}(A)=A \setminus\{a\} \cup \{\overline{a}\}$,
  and $\varphi_{S,\alpha}(B)=B$. In both cases we have that  $\varphi_{S,\alpha}(A)\varphi_{S,\alpha}(B)$ is an edge of $F_k(G)$.  Suppose that $|S \cap A|=2$. Thus, $|S \cap B|=1$. This implies
  that $\varphi_{S,\alpha}(A)=A$,
  and $\varphi_{S,\alpha}(B)=B$ or $\varphi_{S,\alpha}(B)=B \setminus \{\overline{a}\} \cup \{a\}$. In both cases we have that  $\varphi_{S,\alpha}(A)\varphi_{S,\alpha}(B)$ is an edge of $F_k(G)$. 
  
 Thus, in every case we have that $\varphi_{S,\alpha}$ is automorphism of $F_k(G)$.
  If $\alpha=\emptyset$, then $\varphi_{S,\alpha}$ is the identity. If $\alpha=T_{ S}$, then $\varphi_{S,\alpha}$ is induced by the automorphism of $G$ that swaps the two vertices in $S$, while leaving the remaining vertices fixed.

  Suppose that $\alpha \neq \emptyset, T_{ S}$ and let $S=:\{x,y\}$. Suppose for a contradiction that there exists an automorphism $f$ of $G$ such that 
  \[\varphi_{S,\alpha}=\iota(f) \textrm{ or }\varphi_{S,\alpha}=\mathfrak{c} \circ \iota(f).\] 
  In either case let $z_1,\dots,z_m$,  be the vertices in the orbit of $x$ under $f$.
  That is:
  \[z_1=x; z_i=f(z_{i-1}) \textrm{ for all } 2 \le i \le m; \textrm{ and } f(z_m)=x.\]
   Let $t_1 \in \alpha$ and $t_2 \in T_{ S} \setminus \alpha$. Let now $A \in V_{ S,t_1}$, such that
  $x \in A$, and
  let $B \in V_{S,t_2}$, such that $x \in B$.
  
  Suppose that $\varphi_{S,\alpha}=\iota(f)$. 
  Since $\varphi_{S,\alpha}(A)=A \setminus \{x\}\cup \{y\}$, we have that $y$ is in the orbit
  of $x$ under $f$. Since $\varphi_{S,\alpha}\circ \varphi_{S,\alpha}=e$, we have that $m=2$, and
  $f(x)=y$. This is a contradiction to the fact that $x \in B$ and $y \notin B =\varphi_{S,\alpha}(B)$.
  
  Suppose that $\varphi_{S,\alpha}=\mathfrak{c} \circ \iota(f)$. Thus, $k=n/2$. Note that $x \in \iota(f)(A)$ and $u \notin \iota(f)(A)$ for all $u \in A \setminus \{x\}$. Let $A':=A \setminus \{x\}$
  and $\overline{A'}:=V(G)\setminus (A'\cup S)$. If $f(x)=x$, then $x \in \iota(f)(B)$ and
  $x\notin \mathfrak{c} \circ \iota(f)(B)= \varphi_{S,\alpha}(B)=B$---a contradiction. By recalling that 
  $\varphi_{S,\alpha}(A)=\mathfrak{c} \circ \iota(f)(A)=A \setminus \{x\}\cup \{y\}$ and the fact that
  $f(x)\neq x$, we have that:
  \begin{itemize}
   \item[$(a)$] $f(x) \in \overline{A'}$;
   \item[$(b)$] $z_{m} \in A'$; and
   \item[$(c)$] $f(A'\setminus \{z_{m}\})=\overline{A'}\setminus \{f(x)\}.$
  \end{itemize}
  Since $x \notin \iota(f)(B)$ and $f(x) \in \iota(f)(B)$, 
  we have that  $z_{m} \notin B$ and $f(x) \notin   \mathfrak{c}\circ\iota(f)(B)= \varphi_{S,\alpha}(B)=B$.
  Therefore, \[|B \cap ((A'\setminus \{z_{m}\})\cup (\overline{A'} \setminus \{f(x)\}))|=k-1=n/2-1.\]
  
  Since $|A'\setminus \{x_{m}\}|= |\overline{A'} \setminus \{f(x)\}|=n/2-2$, there exists
  a vertex $v \in A'\setminus \{z_{m}\}$, such that $v \in B$ and $f(v) \in B$.
  Therefore, $f(v) \in \iota(f)(B)$; which implies that $f(v) \notin \varphi_{S,\alpha}(B)=B$---a contradiction.
\end{proof}

Since 
\begin{equation} \label{eq:id} \varphi_{S,\alpha}\circ \varphi_{S,\alpha}=e \textrm{(the identity)} , 
 \end{equation}
we have that
 $\varphi_{S,\alpha}^{-1}=\varphi_{S,\alpha}.$
Let $\beta \subseteq T_{ S}$. Note that
$\varphi_{S,\alpha} \circ\varphi_{S,\beta}=\varphi_{S,\alpha \triangle \beta}$.  We can associate to every $\alpha \subseteq T_{ S}$ its characteristic
vector indexed by the elements of $T_{S}$. These observations show that
\begin{equation}\label{eq:2}
N_{S} := \left \langle \{\varphi_{S,\alpha}: \alpha \subseteq T_{ S}\} \right \rangle \cong \mathbb{Z}_2^{|T_S|}.
\end{equation}

\subsection*{Automorphisms generated by all $2$-cuts with the same neighbours }

Let $Q(G)$ be the set of all $2$-cuts with the same neighbours of $G$, and $q(G):=|Q(G)|$. Let $S_1,\dots,S_{q(G)}$ be the
elements of $Q(G)$ in any given but fixed order. For every $1 \le i \le q(G)$, let 
\[\{x_i,y_i\}:=S_i.\]
For every $1 \le i \le q(G)$ assume that the connected components of $G \setminus S_i$ are sorted
in some fixed order.
Let $\mathcal{T}_G$ be the disjoint union of the $T_{S_i}$, that is
\[\mathcal{T}_G:=\bigcupdot_{i=1}^{q(G)} T_{S_i}.\]
Let $\beta \subseteq \mathcal{T}_G$, and for every $1 \le i \le q(G)$ let 
\[\beta_i:=\beta \cap T_{S_i};\]
thus, $\beta_i \subseteq T_{S_i}$.
Let $\psi_{\beta}:V(F_k(G)) \to V(F_k(G))$ be the map that sends every $A \in V(F_k(G))$ to
\[\psi_{\beta}(A):=\varphi_{S_1,\beta_1} \circ \varphi_{S_2,\beta_2} \circ \cdots \circ \varphi_{S_{q(G)},\beta_{q(G)}}(A).\]

From Proposition~\ref{prop:varphi}, we directly obtain the following.
\begin{corollary}\label{cor:psi}
 For every $\beta \subseteq \mathcal{T}_G$, we have that $\psi_{\beta}$ is 
 an automorphism of $F_k(G)$.
 Moreover, $\psi_{B}$ is an induced automorphism of $F_k(G)$ if and only 
 if $\beta_i = \emptyset$ for all $1 \le i \le q(G)$, 
 or $\beta_i = T_{S_i}$ for all $1 \le i \le q(G)$. 
\end{corollary}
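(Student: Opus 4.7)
The first statement is immediate by composition: each $\varphi_{S_i,\beta_i}$ is an automorphism of $F_k(G)$ by Proposition~\ref{prop:varphi}, so $\psi_\beta$, being their composition, is an automorphism as well.

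For the ``if'' direction of the second statement, I would handle the two cases separately. If $\beta_i=\emptyset$ for every $i$, the last paragraph of the proof of Proposition~\ref{prop:varphi} says that each factor $\varphi_{S_i,\emptyset}$ is the identity, giving $\psi_\beta=\iota(\mathrm{id}_G)$. If $\beta_i=T_{S_i}$ for every $i$, each factor equals $\iota(\tau_i)$, where $\tau_i\in\aut(G)$ is the transposition swapping $x_i$ and $y_i$ (well-defined by the same-neighbours hypothesis). By Lemma~\ref{lem:2cut}$(a)$ the sets $S_i$ are pairwise disjoint, so the $\tau_i$'s commute in $\aut(G)$; setting $f:=\tau_1\tau_2\cdots\tau_{q(G)}\in\aut(G)$ and using that $\iota$ is a homomorphism gives $\psi_\beta=\iota(f)$.

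For the converse, my plan is to mirror the contradiction argument from the end of Proposition~\ref{prop:varphi}. Assume $\psi_\beta$ is induced and that some $\beta_{i_0}$ satisfies $\emptyset\neq\beta_{i_0}\neq T_{S_{i_0}}$; set $S:=S_{i_0}=\{x,y\}$ and pick $t_1\in\beta_{i_0}$ and $t_2\in T_{S_{i_0}}\setminus\beta_{i_0}$. The decisive observation is that membership in any $V_{S_j,t}$ forces $|A\cap S_j|=1$, because the components of $G\setminus S_j$ account for $\sum_l t(l)=k-1$ vertices of $A$; hence $\varphi_{S_j,\beta_j}$ fixes $A$ whenever $|A\cap S_j|\in\{0,2\}$. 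I would therefore construct configurations $A,B\in V(F_k(G))$ with $x\in A\cap B$, with $A\in V_{S,t_1}$ and $B\in V_{S,t_2}$, and with $|A\cap S_j|,|B\cap S_j|\in\{0,2\}$ for every $j\neq i_0$. For such $A,B$, every factor $\varphi_{S_j,\beta_j}$ with $j\neq i_0$ acts as the identity, so $\psi_\beta(A)=A^S=(A\setminus\{x\})\cup\{y\}$ and $\psi_\beta(B)=B$. The two-case analysis that closes the proof of Proposition~\ref{prop:varphi} (splitting into $\psi_\beta=\iota(f)$ and $\psi_\beta=\mathfrak{c}\circ\iota(f)$) then transfers verbatim to produce the contradiction.

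The hard part will be the combinatorial existence of $A$ and $B$: within each component $G_l$ of $G\setminus S$ one must select exactly the prescribed number $t_r(l)$ of vertices while including each nearby $2$-cut $S_j\subseteq V(G_l)$ entirely or not at all. Here Lemma~\ref{lem:2cut}$(b)$ is crucial, since it confines every other $2$-cut to a single component of $G\setminus S$, letting the selection be made one component at a time; a separate small-case check (for example the $4$-cycle excluded from part $(b)$, or configurations in which some $t_r(l)$ is forced to be odd inside a component saturated by paired vertices) should finish the argument.
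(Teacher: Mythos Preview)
The paper offers no proof of this corollary beyond the sentence ``From Proposition~\ref{prop:varphi}, we directly obtain the following,'' so there is little to compare your plan against. Your argument for the first assertion and for the ``if'' direction is correct and is indeed immediate from Proposition~\ref{prop:varphi}.

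Two points deserve attention. First, the corollary as literally printed is not quite right: if some of the $\beta_i$ equal $\emptyset$ and the remaining ones equal $T_{S_i}$, then $\psi_\beta$ is a composition of induced automorphisms (identities and swaps $\iota(\tau_i)$) and hence induced, yet neither alternative in the stated ``only if'' clause holds. What is actually needed later (for $\iota(\mathcal S)\cap\mathcal N=\{e\}$) and what your contradiction argument in fact targets is the per-index version: $\psi_\beta$ is induced if and only if $\beta_i\in\{\emptyset,T_{S_i}\}$ for every $i$. Your ``if'' proof already establishes this stronger sufficiency.

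Second, the gap you flag in the ``only if'' direction is genuine and not obviously a small case-check. Your reduction requires $A\in V_{S,t_1}$ and $B\in V_{S,t_2}$ with $|A\cap S_j|,|B\cap S_j|\in\{0,2\}$ for every $j\neq i_0$; but inside a component $G_l$ of $G\setminus S$ whose vertex set is a union of other $2$-cuts, every admissible selection has even size, so an odd prescribed value $t(l)$ is unattainable. The $4$-cycle already witnesses this obstruction (each component of $G\setminus S_1$ is a single vertex lying in $S_2$), and for general $G$ you have given no reason why the obstruction is confined to a short list of exceptional graphs. Since the paper supplies no argument here either, the ``only if'' direction remains unproved both in the paper and in your plan; completing it would require either a careful existence argument for $A,B$ or a different route that does not localise the action of $\psi_\beta$ to a single $S_{i_0}$.
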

\qed

\begin{example}
 Let $G$ be as in Figure~\ref{fig:example}, $S_1:=\{x_1,y_1\}$ and $S_2:=\{x_2,y_2\}$. Let $\{w\}$ and 
 $\{x_2,z,y_2\}$ be the vertex sets of the connected components of $G\setminus S_1$, respectively (in order); 
 and  let $\{z\}$ and 
 $\{x_1,w,y_1\}$ be the vertex sets of the connected components of $G\setminus S_2$, respectively (in order).
 Let $t_1=(0,1)_{S_1}, t_2=(0,1)_{S_2}$ and $\beta=\{t_1,t_2\}$; thus,
 $\beta_1=\{t_1\}$ and $\beta_2=\{t_2\}$. We have that:

 \[
 \begin{array}{lclclcl}
    \psi_\beta (\{x_1,x_2\}) & =& \varphi_{S_1,\beta_1} \circ \varphi_{S_2,\beta_2} (\{x_1,x_2\}) &=
    & \varphi_{S_1,\beta_1} (\{x_1,y_2\}) &=& \{y_1,y_2\};\\  [2pt]
     \psi_\beta (\{x_1,z\}) & =& \varphi_{S_1,\beta_1} \circ \varphi_{S_2,\beta_2} (\{x_1,z\}) &=
    & \varphi_{S_1,\beta_1} (\{x_1,z\}) &=& \{y_1,z\};\\ [2pt]
    \psi_\beta (\{x_1,y_2\}) & =& \varphi_{S_1,\beta_1} \circ \varphi_{S_2,\beta_2} (\{x_1,y_2\}) &=
    & \varphi_{S_1,\beta_1} (\{x_1,x_2\}) &=& \{y_1,x_2\};\\  [2pt]
     \psi_\beta (\{y_1,x_2\}) & =& \varphi_{S_1,\beta_1} \circ \varphi_{S_2,\beta_2} (\{y_1,x_2\}) &=
    & \varphi_{S_1,\beta_1} (\{y_1,y_2\}) &=& \{x_1,y_2\};\\ [2pt]
    \psi_\beta (\{y_1,z\}) & =& \varphi_{S_1,\beta_1} \circ \varphi_{S_2,\beta_2} (\{y_1,z\}) &=
    & \varphi_{S_1,\beta_1} (\{y_1,z\}) &=& \{x_1,z\};\\  [2pt]
     \psi_\beta (\{y_1,y_2\}) & =& \varphi_{S_1,\beta_1} \circ \varphi_{S_2,\beta_2} (\{y_1,y_2\}) &=
    & \varphi_{S_1,\beta_1} (\{y_1,x_2\}) &=& \{x_1,x_2\};\\ [2pt]
    \psi_\beta (\{w,x_2\}) & =& \varphi_{S_1,\beta_1} \circ \varphi_{S_2,\beta_2} (\{w,x_2\}) &=
    & \varphi_{S_1,\beta_1} (\{w,y_2\}) &=& \{w,y_2\};\\  [2pt]
     \psi_\beta (\{w,z\}) & =& \varphi_{S_1,\beta_1} \circ \varphi_{S_2,\beta_2} (\{w,z\}) &=
    & \varphi_{S_1,\beta_1} (\{w,z\}) &=& \{w,z\};\\[2pt]
    \psi_\beta (\{w,y_2\}) & =& \varphi_{S_1,\beta_1} \circ \varphi_{S_2,\beta_2} (\{w,y_2\}) &=
    & \varphi_{S_1,\beta_1} (\{w,x_2\}) &=& \{w,x_2\};\\ [2pt] 
     \psi_\beta (\{w,x_1\}) & =& \varphi_{S_1,\beta_1} \circ \varphi_{S_2,\beta_2} (\{w,x_1\}) &=
    & \varphi_{S_1,\beta_1} (\{w,x_1\}) &=& \{w,x_1\};\\ [2pt]
    \psi_\beta (\{x_1,y_1\}) & =& \varphi_{S_1,\beta_1} \circ \varphi_{S_2,\beta_2} (\{x_1,y_1\}) &=
    & \varphi_{S_1,\beta_1} (\{x_1,y_1\}) &=& \{x_1,y_1\};\\[2pt]
     \psi_\beta (\{w,y_1\}) & =& \varphi_{S_1,\beta_1} \circ \varphi_{S_2,\beta_2} (\{w,y_1\}) &=
    & \varphi_{S_1,\beta_1} (\{w,y_1\}) &=& \{w,y_1\};\\ [2pt]
    \psi_\beta (\{x_2,y_2\}) & =& \varphi_{S_1,\beta_1} \circ \varphi_{S_2,\beta_2} (\{x_2,x_2\}) &=
    & \varphi_{S_1,\beta_1} (\{x_2,y_2\}) &=& \{x_2,y_2\};\\  [2pt]
     \psi_\beta (\{x_2,z\}) & =& \varphi_{S_1,\beta_1} \circ \varphi_{S_2,\beta_2} (\{x_2,z\}) &=
    & \varphi_{S_1,\beta_1} (\{x_2,z\}) &=& \{x_2,z\};\\ [2pt]
    \psi_\beta (\{z,y_2\}) & =& \varphi_{S_1,\beta_1} \circ \varphi_{S_2,\beta_2} (\{z,y_2\}) &=
    & \varphi_{S_1,\beta_1} (\{z,y_2\}) &=& \{z,y_2\}.\\ 
\end{array}
\]\qed
 \end{example}
%
Corollary~\ref{cor:2cut} implies that for every $1 \le i < j \le |q(G)|$,  we have that 
\begin{equation}\label{eq:1}
 \varphi_{S_i,\beta_i} \circ \varphi_{S_j,\beta_j} = \varphi_{S_j,\beta_j} \circ \varphi_{S_i,\beta_i}.
 \end{equation}
We can assign to $\beta$ its characteristic vector indexed by the elements of $\mathcal{T}_G$.
 Thus, by (\ref{eq:2}) and (\ref{eq:1}), we have that
\[\mathcal{N}:=\left \langle \{\psi_\beta: \beta \in \mathcal{T}_G\} \right \rangle \cong \mathbb{Z}_2^{|\mathcal{T}_G|}.\]

\subsection*{$\aut(G)$ acting on $\mathcal{T}_G$}

Let $f \in \aut(G)$. Note that for every $1 \le i \le q(G)$, we have that $f(S_i) \in Q(G)$. 
Abusing notation let $f(i)$ be the index such that \[f(S_i)=S_{f(i)}.\]
 Let $r:=c_{S_i}=c_{S_{i_f}}$; let $G_1,\dots,G_r$ and
$G_1',\dots,G_r'$ be the connected components of $G \setminus S_i$ and $G \setminus S_{i_f}$, respectively, in  their respective orders.
For every $1 \le j \le r$, abusing notation let $f(i)(j)$ be the index such that
\[f(G_j)=G_{f(i)(j)}'.\]
Let $t:=(k_1,\dots,k_{r})_{S_i} \in \mathcal{T}_G$, and let 
\[ft=f(k_1,\dots,k_{r})_{S_i}:=(k_{f(i)(1)},\dots,k_{f(i)(r)})_{S_{f(i)}} \in \mathcal{T}_G.\]
Note that

\LabelQuote{$A\in V_{S_i,t}$ if and only if $f(A)\in V_{S_{f(i)},ft}$.}{$\ast$}

We have defined an action of $\aut(G)$ on $\mathcal{G}$, since for every $t \in \mathcal{T}_G$,
we have that:
\begin{itemize}
 \item $e t=t$, where $e$ is the identity on $\aut(G)$; and
 \item $f(gt)=(f\circ g)t$ for every $f,g \in \aut(G)$.
\end{itemize}
We extend this action to the subsets of $\mathcal{T}_G$, by defining
\[f\beta:=\{ft:t \in \beta\},\]
for every $\beta \subseteq T_G$.

Suppose that $k=n/2$. We can define an action of $\langle \mathfrak{c} \rangle$ on
$\mathcal{T}_G$, by defining
 \[\mathfrak{c}(k_1,\dots,k_{r})_{S_i}:=(|G_1|-k_1,\dots,|G_r|-k_{r})_{S_i}.\]
 Note that

\LabelQuote{$A\in V_{S_i,t}$ if and only if $\mathfrak{c}(A) \in V_{S_{i},\mathfrak{c}t}$.}{$\dagger$}
As before, we extend this action to the subsets of 
 $\mathcal{T}_G$, by defining
\[\mathfrak{c}\beta:=\{\mathfrak{c}t:t \in \beta\},\]
for every $\beta \subseteq T_G$.

Recall that we have labeled the vertices of every $S_i$, so that $S_i=\{x_i,y_i\}$.
Let 
\[\mathcal{S}:=\left \{ f \in \aut(G):f(x_i)=x_{f(i)}  \textrm{ for every } 1 \le i \le q(G) \right \}.\]
That is, the elements of $\mathcal{S}$ send $x$'s to $x$'s, and $y$'s to $y$'s.
Thus, if $f$ and $g$ are two elements of $\mathcal{S}$, then so
are $f^{-1}$ and $f\circ g$. Therefore $\mathcal{S}$ is subgroup
of $\aut(G)$.

\begin{lemma}\label{lem:psi_com}
 Let $f \in \mathcal{S}$ and $\beta \in \mathcal{T}_G$. Then
 \begin{itemize}
 \item[$(a)$] \[\iota(f) \circ \psi_\beta = \psi_{f\beta} \circ \iota(f); \textrm{ and}\]
 \item[$(b)$] if $k=n/2$, then 
  \[\mathfrak{c} \circ \psi_\beta = \psi_{\mathfrak{c}\beta} \circ \mathfrak{c}.\]
  \end{itemize}
\end{lemma}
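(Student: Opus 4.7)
The plan is to reduce both identities to their single-cut versions and then use the commutativity from equation~(\ref{eq:1}) to assemble the full statement. For (a), the single-cut version is
\[
\iota(f) \circ \varphi_{S_i,\alpha} \;=\; \varphi_{S_{f(i)},\, f\alpha} \circ \iota(f),
\]
for every $f \in \mathcal{S}$, every index $i$, and every $\alpha \subseteq T_{S_i}$; for (b) it is
\[
\mathfrak{c} \circ \varphi_{S_i,\alpha} \;=\; \varphi_{S_i,\, \mathfrak{c}\alpha} \circ \mathfrak{c}.
\]
Granting these, I would iteratively move $\iota(f)$ (resp.\ $\mathfrak{c}$) past each factor in the product $\psi_\beta = \varphi_{S_1,\beta_1} \circ \cdots \circ \varphi_{S_{q(G)},\beta_{q(G)}}$. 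Because the $S_j$'s are pairwise disjoint by Lemma~\ref{lem:2cut}, the sets $T_{S_j}$ are disjoint in $\mathcal{T}_G$, so $(f\beta)_{f(i)} = f\beta_i$ and $(\mathfrak{c}\beta)_i = \mathfrak{c}\beta_i$; together with commutativity this identifies the resulting product with $\psi_{f\beta}$ (resp.\ $\psi_{\mathfrak{c}\beta}$).

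To verify the single-cut identity for (a) on an arbitrary $A\in V(F_k(G))$, I would split into cases according to $|A \cap S_i|$. If $|A \cap S_i| \neq 1$, then $\varphi_{S_i,\alpha}$ fixes $A$; since $f(S_i) = S_{f(i)}$, we also have $|f(A) \cap S_{f(i)}| = |A \cap S_i| \neq 1$, so $\varphi_{S_{f(i)},f\alpha}$ fixes $f(A)$, and both sides equal $f(A)$. If $|A \cap S_i| = 1$, there is a unique $t \in T_{S_i}$ with $A \in V_{S_i,t}$; by the observation~$(\ast)$, $f(A) \in V_{S_{f(i)},ft}$, and since $f$ acts bijectively on tuples (with inverse $f^{-1}$), $t \in \alpha$ iff $ft \in f\alpha$. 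The nontrivial subcase ($t \in \alpha$) reduces to the identity $f(A^{S_i}) = f(A)^{S_{f(i)}}$, which uses exactly the defining property $f(x_i) = x_{f(i)}$, $f(y_i) = y_{f(i)}$ of $f \in \mathcal{S}$, together with $A^{S_i} = (A\setminus S_i)\cup(S_i\setminus A)$.

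Part (b) follows from the same template with $(\dagger)$ replacing $(\ast)$; the simplification is that $\mathfrak{c}$ preserves the cut index, so no permutation of factors arises. The analogous nontrivial subcase reduces to $\mathfrak{c}(A^{S_i}) = \mathfrak{c}(A)^{S_i}$, which holds because the two sets agree outside $S_i$ (both equal $V(G)\setminus A$ there) and both contain the unique vertex of $S_i$ absent from $A$. The only delicate point in the whole argument is the bookkeeping involved in matching the product order of the $\varphi_{S_i,\beta_i}$'s to the permutation of indices induced by $f$; commutativity together with the disjointness of the $T_{S_j}$'s absorb this completely, so no genuinely hard step remains.
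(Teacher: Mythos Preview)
Your proposal is correct and follows essentially the same route as the paper: reduce to the single-cut identities $(a')$ and $(b')$, verify these by splitting on $|A\cap S_i|$ and invoking $(\ast)$ and $(\dagger)$, then iterate across the factors of $\psi_\beta$ using the commutativity~(\ref{eq:1}). One minor remark: the identity $f(A^{S_i}) = f(A)^{S_{f(i)}}$ actually needs only $f(S_i)=S_{f(i)}$ as sets (true for any $f\in\aut(G)$), not the full labeling-preserving property of $\mathcal{S}$, so your attribution there is slightly stronger than required---but this does not affect correctness.
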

\begin{proof}
First we show that for every $1 \le i \le q(G)$, we have that
 \begin{itemize}
 \item[$(a')$] \[\iota(f) \circ \varphi_{S_i,\beta_i} = \varphi_{S_{f(i)},f\beta_{f(i)}} \circ \iota(f); 
 \textrm{ and } \]
 \item[$(b')$]  if $k=n/2$, then 
  \[\mathfrak{c} \circ \varphi_{S_i,\beta_i} = \varphi_{S_i,\mathfrak{c}\beta_i} \circ \mathfrak{c}.\]
 \end{itemize}
 
Let $A \in V(F_k(G))$. If $|A \cap S_i|\neq 1$, then
\[\varphi_{S_i,\beta_i}(A)=\varphi_{S_{f(i)},f\beta_{f(i)}}(A)=A,\]
if in addition $k=n/2$, then
\[\varphi_{S_i,\beta_i}(A)=\varphi_{S_i,\mathfrak{c}\beta_i}(A)=A.\]
Thus, $(a')$ and $(b')$ hold in this case. 

Suppose that $|A \cap S_i|= 1$, and 
let $t \in T_{S_i}$ such that $A \in V_{S_i,t}$. By $(\ast)$, we have that:
\begin{itemize}
\item if $t \in \beta_i$, then
\[\iota(f) \circ \varphi_{S_i,\beta_i}(A) =\iota(f)(A^{S_i})=\iota(f)(A)^{S_{f(i)}}= \varphi_{S_{i_f},f\beta_{f(i)}} \circ \iota(f)(A);\]
and
\item if $t \notin \beta_i$, then
\[\iota(f) \circ \varphi_{S_i,\beta_i}(A) =\iota(f)(A)= \varphi_{S_{f(i)},f\beta_{f(i)}} \circ \iota(f)(A).\]
\end{itemize}
Thus, ($a'$) holds. 

Suppose that $k=n/2$. By $(\dagger)$ we have that:
\begin{itemize}
\item if $t \in \beta_i$, then
\[\mathfrak{c} \circ \varphi_{S_i,\beta_i}(A) =\mathfrak{c}(A^{S_i})= \mathfrak{c}(A)^{S_i}= \varphi_{S_{i},\mathfrak{c}\beta_{i}} \circ \mathfrak{c}(A);\]
and
\item if $t \notin \beta_i$ then
\[\mathfrak{c} \circ \varphi_{S_i,\beta_i}(A) =\mathfrak{c}(A)= \varphi_{S_{i},\mathfrak{c}\beta_{i}} \circ \mathfrak{c}(A).\]
\end{itemize}
Thus, ($b'$) holds.

($a'$) and (\ref{eq:1}) imply that 
\begin{align*}
 \iota(f) \circ \psi_\beta &= \iota(f) \circ \varphi_{S_1,\beta_1} \circ \varphi_{S_2\beta_2} \circ \cdots \circ \varphi_{S_{q(G),\beta_q(G)}} \\
 &=  \varphi_{S_{f(1)},f\beta_{f(1)}} \circ \varphi_{S_{f(2)},f\beta_{f(2)}} \circ \cdots \circ \varphi_{S_{q(G)_f,f\beta_{f(q(G))}}} \circ \iota(f) \\
  &=  \varphi_{S_{1},f\beta_{1}} \circ \varphi_{S_{2},f\beta_{2}} \circ \cdots \circ \varphi_{S_{q(G),f\beta_{q(G)}}} \circ \iota(f) \\
  &= \psi_{f\beta}\circ \iota(f).
\end{align*}
Thus, $(a)$ holds.

Suppose that $k=n/2$. ($b'$)  implies that 
\begin{align*}
 \mathfrak{c} \circ \psi_\beta &= \mathfrak{c} \circ \varphi_{S_1,\beta_1} \circ \varphi_{S_2\beta_2} \circ \cdots \circ \varphi_{S_{q(G)},\beta_q(G)} \\
 &=  \varphi_{S_{1},\mathfrak{c}\beta_{1}} \circ \varphi_{S_{2},\mathfrak{c}\beta_{2}} \circ \cdots \circ \varphi_{S_{q(G)},\mathfrak{c}\beta_{q(G)}} \circ \mathfrak{c} \\
  &= \psi_{\mathfrak{c}\beta}\circ \mathfrak{c}.
\end{align*}
Thus, $(b)$ holds.
\end{proof}

We are ready to show our main theorem.
\begin{theorem}\label{thm:main}
Let $G$ be a connected graph. Then $\aut(F_k(G))$ contains a subgroup
\[\mathcal{G}\cong   
\begin{cases}
\mathbb{Z}_2 \wr_{\mathcal{T}_G} \iota(\mathcal{S}) & \textrm{ if } k\neq n/2;\\
\mathbb{Z}_2 \wr_{\mathcal{T}_G} \left (\iota(\mathcal{S}) \times \mathbb{Z}_2 \right ) & \textrm{ if } k=n/2;
\end{cases}
\]
such that $\iota(\aut(G)) \le \mathcal{G}$.
\end{theorem}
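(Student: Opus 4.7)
The plan is to exhibit $\mathcal{G}$ as an internal semidirect product $\mathcal{N}\rtimes H$, where $H:=\iota(\mathcal{S})$ when $k\neq n/2$ and $H:=\iota(\mathcal{S})\times\langle\mathfrak{c}\rangle$ when $k=n/2$, and then to recognize this semidirect product as the stated wreath product. Lemma~\ref{lem:psi_com} gives immediately that conjugation by any element of $H$ sends $\psi_\beta$ to $\psi_{h\beta}$ for some subset $h\beta\subseteq\mathcal{T}_G$; hence $H$ normalizes $\mathcal{N}$, and $\mathcal{G}:=\mathcal{N}\cdot H$ is a subgroup of $\aut(F_k(G))$. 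In the case $k=n/2$, the factors $\iota(\mathcal{S})$ and $\langle\mathfrak{c}\rangle$ of $H$ commute by the identity $\mathfrak{c}\circ\iota(f)=\iota(f)\circ\mathfrak{c}$ recalled in the Introduction, and they intersect trivially because $\mathfrak{c}$ is never induced.

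Next I would check that $\mathcal{N}\cap H=\{e\}$. If $\psi_\beta\in\iota(\mathcal{S})$, then $\psi_\beta$ is induced, so Corollary~\ref{cor:psi} gives either $\beta_i=\emptyset$ for every $i$ (whence $\psi_\beta=e$), or $\beta_i=T_{S_i}$ for every $i$. The latter case is incompatible with being in $\iota(\mathcal{S})$, because the only automorphism of $G$ producing that $\psi_\beta$ swaps each pair $\{x_i,y_i\}$ and therefore sends $x$'s to $y$'s, so it lies outside $\mathcal{S}$. When $k=n/2$ one must additionally rule out $\psi_\beta=\mathfrak{c}\circ\iota(f)$ with $\beta$ nonempty: the all-full case is excluded as before, while $\beta=\emptyset$ would force $\mathfrak{c}=\iota(f^{-1})$, again contradicting the non-inducedness of $\mathfrak{c}$.

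The identification with the wreath product then proceeds from~(\ref{eq:2}): the map $\mathcal{N}\to\mathbb{Z}_2^{\mathcal{T}_G}$ sending $\psi_\beta$ to the characteristic vector of $\beta$ is a group isomorphism, and under this isomorphism the conjugation action of $H$ on $\mathcal{N}$ described in Lemma~\ref{lem:psi_com} corresponds precisely to the permutation action of $H$ on the index set $\mathcal{T}_G$ coming from observation $(\ast)$ (and, when $k=n/2$, from $(\dagger)$). This is by definition the wreath product $\mathbb{Z}_2\wr_{\mathcal{T}_G}H$, yielding the first assertion.

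For the containment $\iota(\aut(G))\le\mathcal{G}$, observe that since $x_i$ and $y_i$ have the same neighbors in $V(G)\setminus S_i$, the transposition $\tau_i$ swapping $x_i$ and $y_i$ and fixing all other vertices is an automorphism of $G$; by Lemma~\ref{lem:2cut}(a) the $\tau_i$'s pairwise commute and generate a subgroup $P\cong\mathbb{Z}_2^{q(G)}$ of $\aut(G)$. For any $f\in\aut(G)$, composing with the appropriate transpositions from $P$ produces an automorphism sending each $x_i$ to $x_{f(i)}$, that is, an element of $\mathcal{S}$; hence $\aut(G)=P\cdot\mathcal{S}$. A direct unpacking of the definitions shows $\iota(\tau_i)=\varphi_{S_i,T_{S_i}}\in\mathcal{N}$, so $\iota(P)\le\mathcal{N}$ and therefore $\iota(\aut(G))=\iota(P)\cdot\iota(\mathcal{S})\le\mathcal{N}\cdot\iota(\mathcal{S})\le\mathcal{G}$. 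The most delicate point I expect is the trivial-intersection step when $k=n/2$, where one must carefully separate the contributions of $\iota(\mathcal{S})$ and $\langle\mathfrak{c}\rangle$ inside $\mathcal{N}$ using both Corollary~\ref{cor:psi} and the non-inducedness of $\mathfrak{c}$.
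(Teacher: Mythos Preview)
Your proposal is correct and follows essentially the same route as the paper: set $\mathcal{G}=\mathcal{N}H$, use Lemma~\ref{lem:psi_com} for normality, Corollary~\ref{cor:psi} for the trivial intersection, and factor an arbitrary $f\in\aut(G)$ through $\mathcal{S}$ via the swap transpositions $\tau_i$ (the paper phrases this last step as $\iota(f)=\psi_\beta\circ\iota(f')$ with $\beta=\bigcup_{i\in I_f}T_{S_i}$, which is exactly your $\iota(P)\cdot\iota(\mathcal{S})$ decomposition). Your handling of the trivial-intersection step is in fact more careful than the paper's, which simply invokes Corollary~\ref{cor:psi} without spelling out why the all-full case $\beta_i=T_{S_i}$ cannot land in $\iota(\mathcal{S})$ or in $\mathfrak{c}\cdot\iota(\mathcal{S})$.
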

\begin{proof}

Since for $k=n/2$, we have that $\mathfrak{c}\circ \iota(f) = \iota(f) \circ \mathfrak{c}$ for every $f \in \aut(G)$, we have that
\[\langle \iota(\mathcal{S}) \cup \{\mathfrak{c}\} \rangle=\iota(\mathcal{S})\times \mathbb{Z}_2,\]
in this case.
Let 
\[\mathcal{G}:= \begin{cases}
                 \iota(\mathcal{S})\mathcal{N} & \textrm{ if } k\neq n/2; \textrm{ and } \\
                 \left(\langle \iota(\mathcal{S}) \cup \{\mathfrak{c}\} \rangle \right )\mathcal{N} & \textrm{ if } k = n/2.                 
                \end{cases}
\]
Corollary~\ref{cor:psi} implies that
\[\{e\}=\begin{cases}
         \iota(\mathcal{S})\cap \mathcal{N} &\textrm{ if } k\neq n/2;  \\
         \langle \iota(\mathcal{S}) \cup \{\mathfrak{c} \}\rangle\cap \mathcal{N} &\textrm{ if } k\neq n/2.
        \end{cases}
\]
Lemma~\ref{lem:psi_com} implies that 
\begin{itemize}
 \item $\mathcal{G}$ is subgroup of $\aut(F_k(G))$; and 
 \item  $\mathcal{N}$ is normal subgroup of $\mathcal{G}$. 
\end{itemize}
Therefore,
\[\mathcal{G}\cong   
\begin{cases}
\mathbb{Z}_2 \wr_{\mathcal{T}_G} \iota(\mathcal{S}) & \textrm{ if } k\neq n/2;\\
\mathbb{Z}_2 \wr_{\mathcal{T}_G} \left (\iota(\mathcal{S}) \times \mathbb{Z}_2 \right ) & \textrm{ if } k=n/2.
\end{cases}
\]

Since \[\iota(S) \mathcal{N} \le \mathcal{G},\]
it is sufficient to show that
\[\iota(\aut(G)) \le \iota(S)\mathcal{N}.\] 
Let $f \in \aut(G)$; let \[I_f:=\{i:1 \le i \le q(G) \textrm{ and } f(x_i)=y_{f(i)}\} \textrm{ and } U_f:=\bigcup_{i \in I_f} S_i;\]
and let $f':V(G)\to V(G)$ be the map that sends every $v \in V(G)$ to
 \[f'(v)=\begin{cases} f(v) &\textrm{ if } v \notin U_f; \\ 
                    x_{f(i)} &\textrm{ if } v=x_i \textrm{ and } i \in I_f;\\
                    y_{f(i)} &\textrm{ if } v=y_i \textrm{ and } i \in I_f.
                    \end{cases}\]
Note that $f' \in \mathcal{S}$. 
Let 
\[\beta=\bigcup_{i \in I_f} T_{S_i}.\]
We have that
\[\iota(f)=\psi_{\beta} \circ \iota(f').\]
Therefore,
\[\iota( \aut(G) ) \le \iota(S)\mathcal{N}.\] 
\end{proof}

 \bibliographystyle{alpha} 
\bibliography{Automorphism}

 \end{document}